\newtheorem{theorem}{Theorem}
\newtheorem{corollary}[theorem]{Corollary}
\newtheorem{definition}{Definition}
\newtheorem{example}{Example}
\newtheorem{remark}{Remark}[section]
\title{Krasnoselskij-type algorithms for variational inequality problems and fixed point problems in Banach spaces}
\author{Vasile BERINDE}
\address{Department of Mathematics and Computer Science, Technical University of Cluj-Napoca, North University Center at Baia Mare, 
Victoriei 76, 430122 Baia Mare, ROMANIA; 
E-mail: vasile.berinde@mi.utcluj.ro}
\author{M\u ad\u alina P\u acurar}
\address{Department of Economics and Bussiness Administration
	 in German Language 
	Faculty of Economics  and Bussiness Administration 
	Babe\c s-Bolyai University of Cluj-Napoca 
	T. Mihali 58-60, 400591 Cluj-Napoca, ROMANIA; E-mail: madalina.pacurar@econ.ubbcluj.ro}
\begin{document}
\maketitle \pagestyle{myheadings} \markboth{V. Berinde, M. P\u acurar} {Existence and approximation of fixed points...}
\begin{abstract}
Existence and uniqueness as well as the iterative approximation of fixed points of enriched almost contractions in Banach spaces are studied.  The obtained results are generalizations of the great majority of metric fixed point theorems, in the setting of a Banach space. The main tool used in the investigations is to work with the averaged operator $T_\lambda$ instead of the original operator $T$. The effectiveness of the new results thus derived is illustrated by appropriate examples. An application of the strong convergence theorems to solving a variational inequality is also presented. 
\end{abstract}

\section{Introduction}

Fixed point theory offers important tools for nonlinear analysis in the study of the existence and approximation of the solutions of various nonlinear problems (optimization problems, variational inequality problems, inclusion problems, equilibrium problems, nonlinear functional equations etc.), see for example the comprehensive monograph \cite{Zeid}. The sought solution of such a nonlinear problem is expressed as the  fixed point of a suitable operator, i.e., as the solution of an equivalent fixed point problem,
\begin{equation}\label{fix}
x=Tx,
\end{equation}
where $T$ is defined on a space $X$ endowed with a certain structure. Now, the problem \eqref{fix} is solved by applying an appropriate fixed point theorem and in this way we can immediately construct the Picard iterative sequence defined by
\begin{equation} \label{Picard}
x_{n+1}=Tx_{n},\,n\geq 0,
\end{equation}
where $x_0$ is the initial guess, or a more elaborate fixed point iterative scheme, like the Krasnoselskij iteration, if necessary.

Let us exemplify the above facts in the case of a variational inequality problem. The {\it variational inequality problem} (VIP) for a nonlinear mapping $G:H\rightarrow H$ is defined in the following way:
\begin{equation} \label{VIP}
\textnormal {find } x^*\in C \textnormal { such that } \langle G(x^*), x-x^*\rangle \geq 0, \textnormal{ for all } x\in C,    
\end{equation}
where $C$ is a nonempty convex and closed subset of a real Hilbert space $H$ with inner product and induced norm denoted by $\langle \cdot,\cdot\rangle$ and $\|\cdot \|$, respectively. The variational inequality problem \eqref{VIP}, usually denoted by $VIP(G,C)$, is known (see \cite{Byr}) to be equivalent to the following fixed point problem:
 \begin{equation} \label{vfpp}
x= P_C[x-\lambda G(x)], x\in C,   
\end{equation}
\noindent where $P_C$ is the nearest point projection onto $C$.

It is therefore a challenge, for any nonlinear problem under study,  to find sufficient conditions, as general as possible, to ensure that a certain fixed point iterative sequence $\{x_n\}$ converges to a fixed point of $T$, that is, to a solution of the original nonlinear problem. 

In this paper we introduce a very general class of mappings, called enriched almost contractions, for which we prove existence results, existence and uniqueness results, as well as  convergence theorems for the approximation of their fixed points by means of a Krasnoselskij iterative process.  The obtained results are important generalizations of the great majority of metric fixed point theorems in literature, in the setting of a Banach space, and are applied to obtain strong convergence theorems for the Krasnoselskij projection type iterative method derived from the fixed point formulation of  the variational inequality problem  \eqref{VIP}, i.e., 
 \begin{equation} \label{sequence} 
x_{n+1}=(1-\lambda)x_n+\lambda P_C\left(I-\gamma G\right)x_n,\,n\geq 0,\,x_0\in C.
\end{equation}

To this end, we need some definitions, notations and auxiliary facts in fixed point theory, which are presented in the next section.

\section{Preliminaries} \label{s1}
One of the most useful metrical fixed point theorems in nonlinear analysis is  the famous Banach contraction mapping principle, which is based on the contraction condition 
\begin{equation} \label{contraction}
d(Tx,Ty)\leq c\cdot d(x,y),\,x,y\in X\quad (0\leq c<1).
\end{equation}

The Banach contraction mapping principle 
is a simple and versatile tool in establishing existence and uniqueness theorems for operator equations.  This fact motivated researchers to try to extend and generalize the contraction mapping principle in such a way that its area of applications should be enlarged as much as possible. There is a considerable literature on this topic developed in the last 60 years or so, see for example the monographs  \cite{Aga}, \cite{Ber07}, \cite{Pac09}, \cite{Rus01a} and  \cite{RPP}. 

In \cite{Ber04}, the first author introduced a general class of contractive mappings in metric spaces, called initially {\it weak contractions} and renamed later on as {\it almost contractions}, see \cite{BerP16} for a recent survey on single-valued almost contractions.

\begin{definition} [\cite{Ber04}] \label{def-1}
Let $(X,d)$ be a metric space. A map $T:X\rightarrow X$ is called a $(\delta,L)$-
\textit{almost contraction} if there exist the constants $\delta\in (0,1)$ and $L\geq 0$ such that
\begin{equation}  \label{2.1}
d(Tx, Ty)\leq\delta\cdot d(x,y)+L d(y,Tx)\,, \quad\textnormal{for all}\;\;x,y\in
X.
\end{equation}
\end{definition}

\begin{remark}  Let us denote by $Fix\,(T)$ the set of fixed points of $T$ in $X$, i.e., $Fix\,(T)=\{x\in X: x=Tx\}$. Then $T$ is called {\it a weakly Picard operator} (see Rus \cite{Rus01}, \cite{Rus01a}, \cite{RPP})  if

 $(p1')$ $Fix\,(T)\neq \emptyset$;
 
 $(p2')$ the Picard iteration $\{T^n x\}^\infty_{n=0}$ converges to $x^*\in Fix\,(T)$, for any $x\in X$.
 
 Almost contractions are in general weakly Picard operators, see Theorem \ref{th1} below, which is the main result in \cite{Ber04} and, under an appropriate additional uniqueness condition, they are Picard operators (that is, have a unique fixed point), see Theorem \ref{th2}.

\end{remark}

\begin{theorem} [\cite{Ber04}] \label{th1}
Let $(X,d)$ be a complete metric space and let $T:X\rightarrow X$ be a $(\delta,L)$-almost contraction.

Then

$1)$ $Fix\,(T)=\{x\in X:Tx=x\}\neq\emptyset$;

$2)$ For any $x_0\in X$, the Picard iteration
$\{x_n\}^\infty_{n=0}$, $x_n=T^n x_0$,  converges to some $x^\ast\in Fix\,(T)$;

$3)$ The following estimate holds
\begin{equation}  \label{3.2-1}
d(x_{n+i-1},x^\ast) \leq\frac{\delta^i}{1-\delta}\, d(x_n,
x_{n-1})\,,\quad n=0,1,2,\dots;\,i=1,2,\dots
\end{equation}
\end{theorem}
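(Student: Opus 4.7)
The plan is to run the standard Picard-iteration argument, but with care about the asymmetric term $Ld(y,Tx)$ in the almost-contraction inequality \eqref{2.1}. The key observation is that when one feeds the iterates $x=x_{n-1}$, $y=x_n$ (with $x_{n+1}:=Tx_n$) into \eqref{2.1}, the term $d(y,Tx)=d(x_n,x_n)$ vanishes, so the almost-contraction behaves like an ordinary contraction along the Picard orbit. This gives
\[
d(x_n,x_{n+1})=d(Tx_{n-1},Tx_n)\leq\delta\, d(x_{n-1},x_n),
\]
and, by induction, $d(x_n,x_{n+1})\leq\delta^n d(x_0,x_1)$. Geometric decay plus the triangle inequality then yields, for all $m>n$,
\[
d(x_n,x_m)\leq\sum_{k=n}^{m-1}d(x_k,x_{k+1})\leq\frac{\delta^n}{1-\delta}\,d(x_0,x_1),
\]
so $\{x_n\}$ is Cauchy; completeness of $(X,d)$ provides a limit $x^\ast\in X$.

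Next, I would verify that $x^\ast\in Fix(T)$. Applying \eqref{2.1} with $x=x_n$ and $y=x^\ast$ gives
\[
d(x_{n+1},Tx^\ast)=d(Tx_n,Tx^\ast)\leq\delta\, d(x_n,x^\ast)+L\, d(x^\ast,x_{n+1}).
\]
Both summands on the right tend to $0$ as $n\to\infty$, while $d(x_{n+1},Tx^\ast)\to d(x^\ast,Tx^\ast)$ by continuity of $d$. Hence $Tx^\ast=x^\ast$, proving $(1)$ and $(2)$.

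For the a priori error bound \eqref{3.2-1}, I would telescope using the same geometric estimate but starting the chain at index $n-1$: since $d(x_k,x_{k+1})\leq\delta^{k-n+1}d(x_{n-1},x_n)$ for all $k\geq n-1$, summing from $k=n+i-1$ to $m-1$ and letting $m\to\infty$ yields
\[
d(x_{n+i-1},x^\ast)\leq d(x_{n-1},x_n)\sum_{k=n+i-1}^{\infty}\delta^{k-n+1}=\frac{\delta^{i}}{1-\delta}\,d(x_n,x_{n-1}),
\]
which is exactly \eqref{3.2-1}.

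The only real subtlety is psychological: one might be tempted to symmetrize \eqref{2.1} or to swap the roles of $x,y$ when passing to the limit, but neither is needed. The whole point of the proof is that the $L$-term is deliberately parked on $d(y,Tx)$, which vanishes at consecutive Picard iterates and tends to zero when $y=x^\ast$ and $Tx_n\to x^\ast$; once this is spotted, everything reduces to the Banach-type bookkeeping above. Notably, I would \emph{not} try to prove uniqueness of $x^\ast$, since the asymmetry of \eqref{2.1} permits multiple fixed points — uniqueness is reserved for Theorem~\ref{th2} under an extra hypothesis.
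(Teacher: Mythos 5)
Your proof is correct and follows essentially the same route as the paper's own argument (which appears, applied to the averaged operator $T_\lambda$, inside the proof of Theorem \ref{th3}): the $L$-term vanishes at consecutive Picard iterates, giving geometric decay, a Cauchy sequence, and a limit $x^\ast$; the fixed-point property then follows from \eqref{2.1} with $x=x_n$, $y=x^\ast$, and the error estimate from telescoping the chain started at index $n-1$. No substantive differences.
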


\begin{theorem} [\cite{Ber04}] \label{th2}
Let $(X,d)$ be a complete metric space and $T:X\rightarrow X$ be a $(\delta,L)$- almost contraction for which there exist $\delta_1 \in (0,1)$ and $L_1\geq 0$
such that
\begin{equation} \label{3.8}
d(Tx,Ty)\leq \delta_1 \cdot d(x,y)+L_1 \cdot d(x,Tx)\,,\quad \textnormal{for all}%
\;\;x,y\in X\,.  
\end{equation}
Then

$1)$ $T$ has a unique fixed point, i.e. $Fix\,(T)=\{x^{\ast }\}$;

$2)$ The Picard iteration $\{x_{n}\}_{n=0}^{\infty }$ associated to $T$
converges to $x^{\ast }$, for any $x_{0}\in X$;

$3)$ The  estimate \eqref{3.2-1} holds.

$4)$ The rate of convergence of the Picard iteration is given by
\begin{equation}
d(x_{n},x^{\ast })\leq \delta \,d(x_{n-1},x^{\ast })\,,\quad n=1,2,\dots
\label{3.10}
\end{equation}
\end{theorem}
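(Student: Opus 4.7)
The plan is to observe that most of the work has already been carried out by Theorem~\ref{th1}. The almost contraction condition \eqref{2.1} is still in force, so existence of some $x^\ast\in Fix\,(T)$, convergence of the Picard iteration to $x^\ast$, and the a priori estimate \eqref{3.2-1} (parts~2) and~3)) are already furnished by that theorem. The extra hypothesis \eqref{3.8} will be needed only to force uniqueness and to supply the geometric rate in~4). The pivotal observation is that the ``defect'' term $d(x,Tx)$ appearing in \eqref{3.8} vanishes whenever $x$ is a fixed point, so specializing \eqref{3.8} at such a point collapses the almost-contractive inequality into a genuine contractive inequality with constant $\delta_1<1$.

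For uniqueness, I would suppose $x^\ast,y^\ast\in Fix\,(T)$ and apply \eqref{3.8} with $x:=x^\ast$, $y:=y^\ast$. Since $Tx^\ast=x^\ast$, the term $L_1 d(x^\ast,Tx^\ast)$ is zero, leaving
$$d(x^\ast,y^\ast)=d(Tx^\ast,Ty^\ast)\leq \delta_1\, d(x^\ast,y^\ast),$$
and $\delta_1<1$ then forces $d(x^\ast,y^\ast)=0$, so $x^\ast=y^\ast$. For the rate in~4), I would exploit the same cancellation by applying \eqref{3.8} with $x:=x^\ast$ and $y:=x_{n-1}$, which gives
$$d(x_n,x^\ast)=d(Tx^\ast,Tx_{n-1})\leq \delta_1\, d(x^\ast,x_{n-1}),$$
i.e.\ the advertised linear rate, with the constant $\delta_1$ matching the notation of hypothesis \eqref{3.8}.

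The hard part, such as it is, will be noticing that \eqref{3.8} must be invoked in the specific \emph{asymmetric} form above: the nuisance term $L_1 d(x,Tx)$ vanishes only when the first argument is a fixed point, not the second, so the order of the variables matters when specializing. Beyond this care, there is no substantive obstacle; the genuine analytic work --- the Cauchy property of the Picard sequence and the bound \eqref{3.2-1} --- has already been carried out in Theorem~\ref{th1}, and the present theorem is essentially a one-line enhancement that uses condition \eqref{3.8} to upgrade weakly Picard behaviour to Picard behaviour.
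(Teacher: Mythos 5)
Your proposal is correct and follows essentially the same route as the paper: Theorem~\ref{th2} is quoted from the source without proof here, but the paper's proof of the analogous enriched result (Theorem~\ref{th2a}) likewise obtains existence, convergence and the error estimate from the preceding existence theorem and then derives uniqueness by specializing the auxiliary condition at two fixed points so that the defect term vanishes, exactly as you do. Your computation for part 4) correctly yields the constant $\delta_1$ rather than $\delta$; the $\delta$ appearing in \eqref{3.10} should be read as $\delta_1$, consistent with what the specialization of \eqref{3.8} at $x:=x^{\ast}$, $y:=x_{n-1}$ actually gives.
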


As shown in \cite{Ber04}, the class of almost contractions is very large and includes most of the main important contractive type mappings in metrical fixed point theory, according to Rhoades's classification: Picard-Banach contractions, Kannan contractions \cite{Kan68}, \cite{Kan69}, Chatterjea mappings \cite{Chat}, as well as Zamfirescu operators \cite{Zam} etc. It also includes some nonexpansive mappings, as shown by the next example.

\begin{example} [\cite{Ber04}] \label{ex3}
Let $[0,1]$ be the unit interval with the usual norm. Let $T:[0,1]\rightarrow [0,1]$ be the identity map, i.e., $Tx=x$,
for all $x\in [0,1]$.\newline
Then 

1) $T$ satisfies the almost contraction condition (\ref{2.1}) with $\delta\in(0,1)$ arbitrary and $%
L\geq 1-\delta$. 

2) $Fix\,(T)=[0,1]$. 
\end{example}

On the other hand, using the technique of enrichment of nonexpansive mappings, introduced by the first author in \cite{Ber19b}, in the case of Hilbert spaces, the current authors introduced and studied the following classes of enriched mappings: enriched strictly pseudocontractive operators in Hilbert spaces \cite{Ber18}, enriched nonexpansive mappings in Banach spaces \cite{Ber20}, enriched Picard-Banach contractions \cite{BerP20}, enriched Kannan mappings  \cite{BerP21} and enriched Chatterjea mappings  \cite{BerP19c}, all in the setting of a real Banach space. 

\section{Enriched  almost contractions in Banach spaces} \label{s2}

It is therefore natural to introduce and study the class of enriched almost contractions, which will include all the previous mentioned classes of enriched contractive type mappings. Due to the way the enrichment process of an operator $T$ is performed, that is, by means of the averaged operator $T_\lambda$, we work in the setting of a Banach space. We start by presenting the first main result from \cite{BerP20}.

\begin{definition}[\cite{BerP20}] \label{def0}
Let $(X,\|\cdot\|)$ be a linear normed space. A mapping $T:X\rightarrow X$ is said to be a $(b,\theta$)-{\it enriched contraction} if there exist $b\in[0,+\infty)$ and $\theta\in[0,b+1)$ such that
\begin{equation} \label{eq3}
\|b(x-y)+Tx-Ty\|\leq \theta \|x-y\|,\forall x,y \in X.
\end{equation}
\end{definition}

\begin{theorem} [\cite{BerP20}] \label{th0}
Let $(X,\|\cdot\|)$ be a Banach space and $T:X\rightarrow X$ a $(b,\theta$)-{\it enriched contraction}. Then

$(i)$ $Fix\,(T)=\{p\}$, for some $p\in X$;

$(ii)$ There exists $\lambda\in (0,1]$ such that the iterative method
$\{x_n\}^\infty_{n=0}$, given by
\begin{equation}
x_{n+1}=(1-\lambda)x_n+\lambda T x_n,\,n\geq 0,
\end{equation}
converges to p, for any $x_0\in X$;

$(iii)$ The  estimate \eqref{3.2-1} holds with $\delta=\dfrac{\theta}{b+1}$.
\end{theorem}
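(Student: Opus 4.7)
The plan is to reduce the theorem to the classical Banach contraction principle applied to the averaged operator $T_\lambda x = (1-\lambda)x + \lambda T x$, exploiting the fact that the Krasnoselskij iteration for $T$ is exactly the Picard iteration for $T_\lambda$, and that (for $\lambda \in (0,1]$) $Fix(T_\lambda) = Fix(T)$.

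The first step is to identify the correct value of $\lambda$. If $b>0$, I choose $\lambda = 1/(b+1) \in (0,1)$, so that $1-\lambda = b/(b+1)$. Then for any $x,y \in X$ I compute
\begin{equation*}
T_\lambda x - T_\lambda y = \tfrac{b}{b+1}(x-y) + \tfrac{1}{b+1}(Tx-Ty) = \tfrac{1}{b+1}\bigl[b(x-y) + Tx - Ty\bigr].
\end{equation*}
Taking norms and applying the enriched contraction condition \eqref{eq3} gives
\begin{equation*}
\|T_\lambda x - T_\lambda y\| \leq \tfrac{\theta}{b+1}\,\|x-y\|,
\end{equation*}
and since $\theta < b+1$, the operator $T_\lambda$ is a Banach contraction on $X$ with constant $\delta = \theta/(b+1) \in [0,1)$. (If $b=0$, then $\lambda = 1$ works trivially, since \eqref{eq3} collapses to $\|Tx-Ty\|\le\theta\|x-y\|$.)

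The second step is to apply the classical Banach contraction mapping principle to $T_\lambda$ on the complete space $(X,\|\cdot\|)$. This yields a unique fixed point $p$ of $T_\lambda$, and convergence of the Picard iteration $x_{n+1} = T_\lambda x_n$ to $p$ from any $x_0 \in X$, together with the standard a~priori/a~posteriori error estimate with ratio $\delta = \theta/(b+1)$, which is exactly the estimate \eqref{3.2-1}.

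Finally, I would observe that $Fix(T_\lambda) = Fix(T)$: indeed, $T_\lambda x = x$ is equivalent to $\lambda(Tx - x) = 0$, which, because $\lambda > 0$, is equivalent to $Tx = x$. Hence the unique fixed point $p$ of $T_\lambda$ is also the unique fixed point of $T$, establishing (i); since $x_{n+1} = T_\lambda x_n = (1-\lambda)x_n + \lambda T x_n$ is exactly the Krasnoselskij iteration for $T$, (ii) follows; and the error estimate transferred from $T_\lambda$ yields (iii). There is no serious obstacle in this argument; the only genuine step is the algebraic identity making $T_\lambda$ a contraction, after which everything is a direct invocation of Banach's theorem.
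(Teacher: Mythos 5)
Your proof is correct and follows exactly the approach of the paper (and of the cited source \cite{BerP20}): choose $\lambda=1/(b+1)$, verify that the averaged operator $T_\lambda$ is a Banach contraction with constant $\delta=\theta/(b+1)$, and transfer the conclusions of the contraction principle back to $T$ via $Fix(T_\lambda)=Fix(T)$. This is the same reduction the paper uses in its proof of the more general Theorem \ref{th3}, so there is nothing to add.
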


Our aim in this section is to generalize the class of mappings introduced in Definition \ref{def0} and the corresponding fixed point result stated in Theorem \ref{th0}.

\begin{definition}\label{def1}
Let $(X,\|\cdot\|)$ be a linear normed space. A mapping $T:X\rightarrow X$ is said to be an {\it enriched almost contraction} if there exist $b\in[0,\infty)$, $\theta\in (0,b+1)$ and $L\geq 0$ such that
\begin{equation} \label{eq3}
\|b(x-y)+Tx-Ty\|\leq \theta \|x-y\|+L\|b(x-y)+Tx-y\|,
\end{equation}
for all $x,y \in X$.
To indicate the constants involved in \eqref{eq3} we shall also call $T$ as an  {\it enriched} $(b,\theta, L)$-{\it  almost contraction}. 
\end{definition}

\begin{example}[ ] \label{ex1}
1) Any $(\delta,L)$-almost contraction in the sense of Definition \ref{def-1} is an enriched $(0,\delta,L)$-almost contraction, i.e., it satisfies \eqref{eq3} with $b=0$ and $\theta=\delta$;

2) Any $(b,\theta)$-enriched  contraction in the sense of Definition \ref{def0},  is an enriched $\left(b,\theta, 0\right)$-almost contraction;

3) Any  $(k,a$)-{\it enriched Kannan mapping} \cite{BerP21}, that is, any mapping $T:X\rightarrow X$ for which there exist  $a\in[0,1/2)$ and $k\in[0,\infty)$ such that
\begin{equation} \label{cond_eKannan}
\|k(x-y)+Tx-Ty\|\leq a \left(\|x-Tx\|+\|y-Ty\|\right),\ for\ all\ x,y \in X,
\end{equation}
 is an enriched $\left(k, \dfrac{a}{1-a}, \dfrac{2 a}{1-a}\right)$-almost contraction;

4) Any $(k,b$)-{\it enriched Chatterjea mapping} \cite{BerP19c}, that is, 
any mapping $T:X\rightarrow X$ for which there exist $b\in[0,1/2)$ and $k\in[0,+\infty)$  such that
$$
\|k(x-y)+Tx-Ty\|\leq  b \left[\|(k+1)(x-y)+y-Ty\|+\right.
$$
\begin{equation} \label{Def_eChatterjea}
\left.+\|(k+1)(y-x)+x-Tx\|\right]\|,\forall x,y \in X,
\end{equation}
 is an enriched $\left(k, \dfrac{b}{1-b}, \dfrac{2 b}{1-b}\right)$-almost contraction.
\end{example}

For many other examples of enriched almost contraction we refer to \cite{BerP16}, where more than 270 developments around the concept of almost contraction are included in the list of References.


Despite the fact that the fixed points of almost contractions can be obtained by means of the Picard iteration, see Theorems \ref{th1} and \ref{th2}, in the case of enriched almost contractions the situation is changed, as the Picard iteration might be not convergent, see Example \ref{ex2a}, or, even if it would be convergent, the limit of the sequence generated by the Picard iteration associated to $T$ could converge to an element which is not  a fixed point of $T$.

This is the reason why in the next theorem we use the Krasnoselskij iteration process to approximate the fixed points of an enriched almost contraction.

\begin{theorem}  \label{th3}
Let $(X,\|\cdot\|)$ be a Banach space and let $T:X\rightarrow X$ be a $(b,\theta,L)$-almost contraction.

Then

$1)$ $Fix\,(T)\neq\emptyset$;

$2)$ For any $x_0\in X$, there exists $\lambda\in (0,1)$ such that the Krasnoselskij iteration 
$\{x_n\}^\infty_{n=0}$, defined by,  
\begin{equation} \label{eq3a}
x_{n+1}=(1-\lambda)x_n+\lambda T x_n,\,n\geq 0,
\end{equation}
converges to some $x^\ast\in Fix\,(T)$, for any $x_0\in X$;

$3)$ The  estimate \eqref{3.2-1} holds
with $\delta=\dfrac{\theta} {b+1}$.
\end{theorem}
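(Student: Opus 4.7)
The strategy is to reduce the statement to the classical almost contraction theorem (Theorem~\ref{th1}) by working with the averaged operator $T_\lambda := (1-\lambda)I + \lambda T$ for a judiciously chosen averaging parameter $\lambda$. The motivation is the pair of algebraic identities
\begin{equation*}
T_\lambda x - T_\lambda y = \lambda\bigl[b(x-y) + Tx - Ty\bigr], \qquad T_\lambda x - y = \lambda\bigl[b(x-y) + Tx - y\bigr],
\end{equation*}
both of which hold exactly when $b = (1-\lambda)/\lambda$, i.e., when $\lambda = 1/(b+1)$. This suggests setting $\lambda := 1/(b+1)$; when $b>0$ this lies in $(0,1)$, while the degenerate case $b=0$ reduces the enriched condition to the classical almost contraction condition, so that Theorem~\ref{th1} applies directly to $T$ itself.

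With this choice of $\lambda$, multiplying the defining inequality \eqref{eq3} by $\lambda$ and substituting the two identities above yields
\begin{equation*}
\|T_\lambda x - T_\lambda y\| \le \frac{\theta}{b+1}\,\|x-y\| + L\,\|T_\lambda x - y\|\qquad (x,y\in X).
\end{equation*}
Because $0 < \theta < b+1$, the coefficient $\delta := \theta/(b+1)$ belongs to $(0,1)$, so $T_\lambda$ is a $(\delta, L)$-almost contraction on the complete metric space associated with the norm, in the sense of Definition~\ref{def-1}. Theorem~\ref{th1} then delivers $Fix\,(T_\lambda)\ne\emptyset$, convergence of the Picard iteration of $T_\lambda$ to some $x^\ast\in Fix\,(T_\lambda)$ from any starting point, and the estimate \eqref{3.2-1} with precisely this~$\delta$.

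To finish, I would transfer these conclusions back to $T$ via two elementary observations. First, since $\lambda>0$, the equation $x=T_\lambda x$ is equivalent to $x=Tx$, so $Fix\,(T_\lambda)=Fix\,(T)$. Second, by the very definition of $T_\lambda$, the Picard iterates $T_\lambda^n x_0$ coincide with the Krasnoselskij iterates \eqref{eq3a} of $T$ with parameter $\lambda$. Hence assertions 1)--3) follow immediately, with $\delta = \theta/(b+1)$.

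I expect no serious obstacle: the entire argument is a transfer of Theorem~\ref{th1} along the averaging trick. The one step that genuinely needs care is recognizing the second identity $T_\lambda x - y = \lambda[b(x-y) + Tx - y]$, since this is what forces the $L$-term of \eqref{eq3} to translate into exactly the $L$-term of the classical almost contraction inequality, and it is what pins down the choice $\lambda = 1/(b+1)$ as the unique averaging parameter that makes the reduction work cleanly.
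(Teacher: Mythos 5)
Your proposal is correct and follows essentially the same route as the paper: the choice $\lambda=1/(b+1)$, the two identities converting the enriched condition into the $(\delta,L)$-almost contraction condition for $T_\lambda$ with $\delta=\theta/(b+1)$, the identification of the Picard iterates of $T_\lambda$ with the Krasnoselskij iterates of $T$, and the use of $Fix\,(T_\lambda)=Fix\,(T)$. The only difference is presentational: the paper re-derives the Cauchy-sequence argument of Theorem~\ref{th1} for $T_\lambda$ in full, whereas you invoke Theorem~\ref{th1} as a black box, which is legitimate.
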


\begin{proof}
For any $\lambda\in (0,1)$ consider the averaged mapping $T_\lambda$, given by
\begin{equation} \label{eq4}
T_\lambda (x)=(1-\lambda)x+\lambda T(x), \forall  x \in X.
\end{equation}
It is well known that $T_\lambda$ possesses the following important property:
$$
Fix(\,T_\lambda)=Fix\,(T).
$$
Now, since $T$ is a $(b,\delta,L)$-almost contraction, there exist $b\in[0,\infty)$, $\theta\in (0,b+1)$ and $L\geq 0$ such that
\begin{equation} \label{eqEAC}
\|b(x-y)+Tx-Ty\|\leq \theta \|x-y\|+L\|b(x-y)+Tx-y\|,
\end{equation}
for all $x,y \in X$.

If $b=0$, the conclusion follows by Theorem \ref{th2}.

If $b> 0$ in \eqref{eqEAC}, then let us put  $\lambda=\dfrac{1}{b+1}$. Obviously, we have $0<\lambda<1$ and thus the contractive condition \eqref{eqEAC} becomes
$$
\left \|\left(\frac{1}{\lambda}-1\right)(x-y)+Tx-Ty\right\|\leq \theta \|x-y\|
$$
$$
+L\left \|\left(\frac{1}{\lambda}-1\right)(x-y)+Tx-y\right\|,\forall x,y \in X,
$$
which, after some simple computations, can be written in an equivalent form
\begin{equation} \label{eq5}
\|T_\lambda x-T_\lambda y\|\leq \delta \|x-y\|+L \|T_\lambda x-y\| ,\forall x,y \in X,
\end{equation}
where $\delta=\dfrac{\theta} {b+1}\in (0,1)$.

The above inequality shows that $T_\lambda$ is an almost contraction in the Banach space $X$. We shall write in the following the norm in $X$ as the induced distance, i.e., 
$$\|x-y\|=d(x,y),$$ 
in order to emphasize the fact that the rest of the proof  works in the general case of a complete metric space.

We shall prove that $T_\lambda$ satisfying \eqref{eq5} has at least a fixed point in $X$. To this
end, let $x_0\in X$ be arbitrary but fixed and let $\{x_n\}^\infty_{n=0}$ be the
Picard iteration defined by 
\begin{equation}\label{t-lambda}
x_{n+1}=T_\lambda x_n,\, n\geq 0.
\end{equation}

Take $x:=x_{n-1}$, $y:=x_n$ in \eqref{eq5} to obtain
$$  d(T_\lambda x_{n-1}, T_\lambda x_n)\leq \delta\cdot d(x_{n-1}, x_n)\,,  $$
which shows that
\begin{equation}\label{3.3}
    d(x_n, x_{n+1})\leq \delta\cdot d(x_{n-1}, x_n)\,.
\end{equation}
Using (\ref{3.3}) we obtain by induction
$$  d(x_n, x_{n+1})\leq \delta^n d(x_0, x_1)\,,
    \quad n=0,1,2,\dots
$$
and then
\begin{eqnarray}\label{3.4}
  d(x_n, x_{n+p}) &\leq & \delta^n
  \big(1+\delta+\cdots+\delta^{p-1}\big) d(x_0, x_1)=\nonumber\\
   &=& \frac{\delta^n}{1-\delta}\,(1-\delta^p)\cdot d(x_0, x_1)\,,
   \quad n,p\in\mathbb{N},\;p\neq 0\,.\qquad
\end{eqnarray}
Since $0<\delta<1$, (\ref{3.4}) shows that $\{x_n\}^\infty_{n=0}$ is a
Cauchy sequence and hence it is convergent.
Let us denote
\begin{equation}\label{3.5}
    x^\ast=\lim_{n\to\infty} x_n\,.
\end{equation}
Then
$$  d(x^\ast, T_\lambda x^\ast)\leq d(x^\ast, x_{n+1})+
    d(x_{n+1}, T_\lambda x^\ast)=d(x_{n+1}, x^\ast)+
    d(T_\lambda x_n, T_\lambda x^\ast)\,.
$$
By (\ref{2.1}) we have
$$  d(T_\lambda x_n, T_\lambda x^\ast)\leq\delta\, d(x_n, x^\ast)+L\,d(x^\ast, T_\lambda x_n)  $$
and hence, by combining the previous two inequalities, one obtains
\begin{equation}\label{3.6}
    d(x^\ast, T_\lambda x^\ast)\leq (1+L) d(x^\ast, x_{n+1})+
    \delta\cdot d(x_n, x^\ast)\,,
\end{equation}
which is valid for all $n\geq 0$. Letting $n\to\infty$ in (\ref{3.6}) we obtain
$$  d(x^\ast, T_\lambda x^\ast)=0  $$
which shows that $x^\ast$ is a fixed point of $T_\lambda$. 
As 
$$Fix\,(T_\lambda)=Fix\,(T),$$
 the first part of the theorem follows.
Now, by letting  $p\to\infty$ in (\ref{3.4}), one obtains the \emph{a priori} estimate
\begin{equation}  \label{3.1}
d(x_n, x^\ast)\leq\frac{\delta^n}{1-\delta}\, d(x_0, x_1)\,,\quad
n=0,1,2,\dots,
\end{equation}
where $\delta=\dfrac{\theta} {b+1}\in (0,1)$.

On the other hand, let us observe that by (\ref{3.3}) we inductively
obtain
$$  d(x_{n+k}, x_{n+k+1})\leq \delta^{k+1}\cdot
    d(x_{n-1}, x_n)\,,\quad k,n\in\mathbb{N}\,,
$$
and hence, similarly to deriving (\ref{3.4}), we get
\begin{equation}\label{3.7}
    d(x_n, x_{n+p})\leq\frac{\delta(1-\delta^p)}{1-\delta}\,
    d(x_{n-1}, x_n)\,,\quad n\geq 1,\; p\in\mathbb{N}^\ast\,.
\end{equation}
Now, by letting $p\to\infty$ in (\ref{3.7}), the following \emph{a posteriori} estimate follows:
\begin{equation}  \label{3.2}
d(x_n, x^\ast)\leq\frac{\delta}{1-\delta}\, d(x_{n-1}, x_n)\,,\quad
n=1,2,\dots
\end{equation}
Obviously, (\ref{3.1}) and (\ref{3.2}) could be merged to obtain the desired unified estimate. 
\end{proof}
\begin{remark}

 Theorem \ref{th3} is an effective generalization of Theorems \ref{th1}, \ref{th2} and \ref{th0} above, of Theorems 2.1 and 3.1 in \cite{BerP21} and of Theorems 2.4 and 3.2 in \cite{BerP19c},  as shown by the next example. 

\end{remark}

\begin{example}[ ] \label{ex2}

Let $X=\left[0,\frac{4}{3}\right]$ with the usual norm and $T: X\rightarrow X$ be given by
\begin{equation} \label{eac}
Tx=
\begin{cases}
1-x, \textnormal{ if } x\in \left[0,\frac{2}{3}\right)\\
\smallskip
2-x, \textnormal{ if } x\in \left[\frac{2}{3}, \frac{4}{3}\right].
\end{cases}
\end{equation}
Then $Fix\,(T)=\left\{\dfrac{1}{2},1\right\}$ and:

1) $T$ is a $(1,\theta, 3)$-enriched almost contraction, for any $\theta\in (0,2)$;

2) $T$ is not an almost contraction;

3) $T$ does not belong to the classes of enriched contractions (\cite{BerP20}), enriched Kannan mappings (\cite{BerP21}) or enriched Chatterjea mappings (\cite{BerP19c});

4) $T$ is neither nonexpansive nor quasi-nonexpansive;

5)  $T$ is not an enriched nonexpansive mapping (\cite{Ber19b}).

\end{example}

\begin{proof}
1) The enriched almost contraction condition \eqref{eq3} is in this case
\begin{equation} \label{eq100}
|b(x-y)+Tx-Ty|\leq \theta |x-y|+L\cdot |b(x-y)+Tx-y|, x,y\in \left[0,\frac{4}{3}\right],
\end{equation}
where $b\in [0,\infty)$, $\theta \in (0,b+1)$ and $L\geq 0$ are constants to be shown that exist.

Because the contraction condition \eqref{eq100} is not symmetric, we have to discuss the following four cases:

{\bf Case 1a}. $x,y\in \left[0,\dfrac{2}{3}\right)$
\smallskip

In this case, condition \eqref{eq100} is satisfied with $b=1$, $\theta\in (0,2)$ arbitrary and $L=0$;
\smallskip

{\bf Case 1b}. $x,y\in \left[\dfrac{2}{3},\dfrac{4}{3}\right]$
\smallskip

Similarly, condition \eqref{eq100} is satisfied if we take $b=1$, $\theta\in (0,2)$ arbitrary and $L=0$;
\smallskip

{\bf Case 1c}. $x\in \left[0,\dfrac{2}{3}\right)$, $y\in \left[\dfrac{2}{3},\dfrac{4}{3}\right]$
\smallskip

In this case, $Tx=1-x$, $Ty=2-y$ and hence condition \eqref{eq100} becomes
\begin{equation}\label{eq10a}
|(b-1)(x-y)-1|\leq \theta |x-y|+L\cdot |(b-1)(x-y)+1-2y|.
\end{equation}
By letting $b=1$ in \eqref{eq10a} we get
\begin{equation}\label{eq11a}
1\leq \theta |x-y|+L\cdot |1-2y|, x\in \left[0,\dfrac{2}{3}\right), y\in \left[\dfrac{2}{3},\frac{4}{3}\right].
\end{equation}
Now, for $y\in \left[\dfrac{2}{3},\dfrac{4}{3}\right]$, we have $|1-2y|\geq \dfrac{1}{3}$ and therefore, in order to have \eqref{eq11a} satisfied for all $x\in \left[0,\dfrac{2}{3}\right)$ and $y\in \left[\dfrac{2}{3},\dfrac{4}{3}\right]$, it is sufficient to choose $L\geq 3$. 

So, if we take $b=1$, $\theta\in (0,2)$ arbitrary and $L\geq 3$, inequality \eqref{eq10a} holds true for all $x\in \left[0,\dfrac{2}{3}\right)$ and $y\in \left[\dfrac{2}{3},\dfrac{4}{3}\right]$.

\smallskip

{\bf Case 1d}. $x\in \left[\dfrac{2}{3},\dfrac{4}{3}\right]$, $y\in \left[0,\dfrac{2}{3}\right)$
\smallskip

In this case, $Tx=2-x$, $Ty=1-y$ and hence condition \eqref{eq100} becomes
\begin{equation}\label{eq12a}
|(b-1)(x-y)+1|\leq \theta |x-y|+L\cdot |(b-1)(x-y)+2-2y|.
\end{equation}
By letting $b=1$ in \eqref{eq12a} we get
\begin{equation}\label{eq13a}
1\leq \theta |x-y|+L\cdot |2-2y|, x\in \left[0,\dfrac{2}{3}\right), y\in \left[\dfrac{2}{3},\frac{4}{3}\right].
\end{equation}
Now, for $y\in \left[0,\dfrac{2}{3}\right)$ we have $|2-2y|\geq \dfrac{2}{3}$ and therefore, in order to have  \eqref{eq13a} satisfied, for all $x\in \left[\dfrac{2}{3},\dfrac{4}{3}\right]$, $y\in \left[0,\dfrac{2}{3}\right)$, it is enough to take $L\geq \dfrac{3}{2}$. 

So, \eqref{eq12a} holds true for all $x\in \left[\dfrac{2}{3},\dfrac{4}{3}\right]$, $y\in \left[0,\dfrac{2}{3}\right)$ if we choose $b=1$, $\theta\in (0,2)$ arbitrary and $L\geq \dfrac{3}{2}$.

By summarizing all four cases, we conclude that with $b=1$, $\theta\in (0,2)$ arbitrary and $L=3$, inequality \eqref{eq100} holds true for all $x,y\in\left[0,\dfrac{4}{3}\right]$, that is,
$T$ is a $(1,\theta, 3)$-enriched almost contraction, with $\theta\in (0,2)$ arbitrary.

2) To show that $T$ is not an almost contraction, just let $x=\dfrac{7}{15}$ and  $y=\dfrac{8}{15}$ in the almost contraction condition \eqref{2.1} to obtain (we recall that $\delta <1$):
$$
\frac{1}{15}\leq \delta \cdot \frac{1}{15}<\frac{1}{15},
$$
a contradiction.

3) $T$ in our example has two fixed points, i.e., $Fix\,(T)=\left\{\dfrac{1}{2},1\right\}$, hence it is not an enriched contraction (\cite{BerP20}), since any enriched contraction has a unique fixed point, see Theorem \ref{th0}. A similar motivation works for the other two classes of contractive mappings (enriched Kannan mappings (\cite{BerP21}) and enriched Chatterjea mappings (\cite{BerP19c})).

4) To see that $T$ is not nonexpansive, we note that $T$ is not continuous at $x=\dfrac{2}{3}$. Assume now that $T$ is quasi-nonexpansive. This would mean that
$$
|Tx-x^*|\leq |x-x^*|,\, \forall x\in \left[0,\dfrac{4}{3}\right], x^*\in Fix\,(T).
$$
Just take $x=\dfrac{2}{3}$ and $x^*=\dfrac{1}{2}$ to get $\dfrac{5}{6}\leq \dfrac{1}{6}$, a contradiction.

5) According to (\cite{Ber19b}, Definition 2.1), $T$ is an enriched nonexpansive mapping if there exists $b\in[0,\infty)$  such that
\begin{equation} \label{eq33a}
\|b(x-y)+Tx-Ty\|\leq (b+1) \|x-y\|,\forall x,y \in X.
\end{equation}
In the particular case of this example, by letting $x=\dfrac{2}{3}$ and $y=\dfrac{1}{2}$ in \eqref{eq33a} we get
$$
(b+1)\cdot \frac{1}{2}\leq (b+1)\cdot \frac{1}{6},
$$
a contradiction.
\end{proof}

\begin{remark}\label{rem2.3}
In connection to item 5) in the above example, we also note that there exists mappings which are simultaneously enriched nonexpansive and $(b,\theta, L)$-enriched almost contractions. The simplest example is the identity map on $[0,1]$, see Example \ref{ex3}.
\end{remark}

\begin{example}[ ] \label{ex2a}
Let $T$ be the enriched almost contraction defined in Example \ref{ex2}. Then, the Picard iteration associated to $T$, that is, the sequence $x_{n+1}=T x_n$, $n\geq 0$, starting from any $x_0\in \left[0, \dfrac{2}{3}\right)\setminus \left\{\dfrac{1}{2}\right\}$ does not converge. However, the Krasnoselskij iteration 
$$
y_{n+1}=(1-\lambda)y_n+\lambda T y_n,\,n\geq 0,
$$
converges to $\dfrac{1}{2}\in Fix\,(T)$, for any $y_0\in \left[0, \dfrac{2}{3}\right)$ and any $\lambda \in (0,1)$, which follows immediately by computing $y_n$ in closed form:
$$
y_n=\dfrac{1}{2}+(1-2\lambda)^n\cdot (y_0-\dfrac{1}{2}),\, n\geq 0.
$$

Similarly, the Picard iteration associated to $T$, that is, the sequence $x_{n+1}=T x_n$, $n\geq 0$, starting from any $x_0\in \left[\dfrac{2}{3}, \dfrac{4}{3}\right]\setminus \left\{1\right\}$ diverges but the Krasnoselskij iteration 
$$
y_{n+1}=(1-\lambda)y_n+\lambda T y_n,\,n\geq 0,
$$
converges to $1\in Fix\,(T)$, for any $y_0\in \left[\dfrac{2}{3}, \dfrac{4}{3}\right]$  and any $\lambda \in (0,1)$.
\end{example}

Now we state an existence and uniqueness result for enriched $(b,\theta,L)$-almost contractions which corresponds to and generalizes Theorem \ref{th2}.

\begin{theorem} \label{th2a}
Let $(X,\|\cdot\|)$ be a Banach space and $T:X\rightarrow X$ be an enriched $(b,\theta,L)$-almost contraction for which there exist $\delta_1 \in (0,1)$ and $L_1\geq 0$
such that
\begin{equation} \label{3.8}
\|Tx-Ty\|\leq \delta_1 \cdot \|x-y\|+L_1 \cdot \|x-Tx\|\,,\quad \textnormal{for all}%
\;\;x,y\in X.
\end{equation}
Then

$1)$ $T$ has a unique fixed point, i.e., $Fix\,(T)=\{x^{\ast }\}$;

$2)$ For any $x_0\in X$, there exists $\lambda\in (0,1)$ such that the Krasnoselskij iteration 
$\{x_n\}^\infty_{n=0}$, defined by,  
$$
x_{n+1}=(1-\lambda)x_n+\lambda T x_n,\,n\geq 0,
$$
converges strongly to $x^\ast$, for any $x_0\in X$;

$3)$ The  estimate \eqref{3.2-1} holds
with $\delta=\dfrac{\theta} {b+1}$.

$4)$ The rate of convergence of the Krasnoselskij iteration is given by
$$
\|x_{n}-x^{\ast }\|\leq \delta \,\|x_{n-1}-x^{\ast }\|\,,\quad n=1,2,\dots
$$
\end{theorem}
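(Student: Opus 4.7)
The plan is to apply Theorem \ref{th2} to the averaged operator $T_\lambda = (1-\lambda)I + \lambda T$, piggybacking on the reduction already carried out in the proof of Theorem \ref{th3} and adding one short computation to handle the extra hypothesis \eqref{3.8}.

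If $b=0$, then the enriched condition \eqref{eq3} reduces to the classical $(\theta,L)$-almost contraction condition, the supplementary hypothesis \eqref{3.8} is exactly the one appearing in Theorem \ref{th2}, and the Krasnoselskij iteration with $\lambda=1$ coincides with the Picard iteration; Theorem \ref{th2} then yields all four conclusions immediately. So assume $b>0$ and, as in the proof of Theorem \ref{th3}, set $\lambda = 1/(b+1) \in (0,1)$. That proof already establishes that $T_\lambda$ is an almost contraction on the Banach space $X$, namely
\[
\|T_\lambda x - T_\lambda y\| \leq \delta \|x-y\| + L \|T_\lambda x - y\|, \quad \delta = \frac{\theta}{b+1} \in (0,1).
\]

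The crux of the new argument is to transfer condition \eqref{3.8} through the averaging. Using $T_\lambda x - T_\lambda y = (1-\lambda)(x-y) + \lambda(Tx - Ty)$ together with the identity $\|x - T_\lambda x\| = \lambda \|x - Tx\|$, a direct triangle-inequality estimate based on \eqref{3.8} yields
\[
\|T_\lambda x - T_\lambda y\| \leq \bigl[1 - \lambda(1-\delta_1)\bigr]\|x-y\| + L_1 \|x - T_\lambda x\|,
\]
and the new constant $1 - \lambda(1-\delta_1)$ lies in $(0,1)$ since $\delta_1 < 1$ and $\lambda \in (0,1)$. Hence $T_\lambda$ simultaneously satisfies the almost contraction condition \eqref{2.1} and the uniqueness-type condition \eqref{3.8}, both rewritten in terms of the induced metric $d(x,y) = \|x-y\|$.

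At this point Theorem \ref{th2}, applied to $T_\lambda$, supplies a unique fixed point $x^* \in X$ of $T_\lambda$, convergence of the Picard iterates $x_{n+1} = T_\lambda x_n$ to $x^*$ for every starting point $x_0 \in X$, the \emph{a priori}/\emph{a posteriori} estimate \eqref{3.2-1} with $\delta = \theta/(b+1)$, and the geometric rate claimed in conclusion (4). Because $Fix\,(T_\lambda) = Fix\,(T)$ and the Picard iteration of $T_\lambda$ is precisely the Krasnoselskij iteration \eqref{eq3a} of $T$, all four assertions of Theorem \ref{th2a} follow at once. The only genuinely new step is the Bianchini-type transfer of \eqref{3.8} to $T_\lambda$, and the short computation above disposes of it without obstacle; everything else is a direct citation of earlier results.
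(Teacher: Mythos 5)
Your proposal is correct, and the transfer computation at its heart checks out: from $T_\lambda x-T_\lambda y=(1-\lambda)(x-y)+\lambda(Tx-Ty)$, the triangle inequality and \eqref{3.8} give $\|T_\lambda x-T_\lambda y\|\leq\bigl[1-\lambda(1-\delta_1)\bigr]\|x-y\|+\lambda L_1\|x-Tx\|=\bigl[1-\lambda(1-\delta_1)\bigr]\|x-y\|+L_1\|x-T_\lambda x\|$, with $1-\lambda(1-\delta_1)\in(0,1)$, so $T_\lambda$ does inherit the Bianchini-type uniqueness condition. Your route differs from the paper's in how uniqueness (and the rate) are obtained. The paper does not transfer \eqref{3.8} to $T_\lambda$ at all: it invokes Theorem \ref{th3} for existence, convergence and the error estimate, and then gets uniqueness in two lines by putting $x:=x^*$, $y:=y^*$ directly into \eqref{3.8}, which kills the $L_1$-term since $x^*=Tx^*$ and yields $\|x^*-y^*\|\leq\delta_1\|x^*-y^*\|$. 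That argument is shorter and needs no computation, but as written it says nothing about conclusion $4)$; your reduction of the whole theorem to Theorem \ref{th2} applied to $T_\lambda$ costs one extra estimate and in exchange delivers all four conclusions, including the convergence rate, in a single citation. (Both approaches share the same minor looseness already present in the source results: the rate one actually extracts from the $\delta_1$-condition is $\delta_1$, respectively $1-\lambda(1-\delta_1)$, rather than the $\delta=\theta/(b+1)$ announced in conclusion $4)$; this is inherited from the statement of Theorem \ref{th2} and is not a defect of your argument.)
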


\begin{proof}
The existence of the fixed point, the convergence of the Krasnoselskij iteration as well as the error estimate follows by Theorem \ref{th3}. 

Suppose $T$ has two distinct fixed points, $x^*,y^*$. Then, by taking $x:=x^*$, $y:=y^*$ in \eqref{3.8}, we obtain
$$
d(x^*,y^*)\leq \delta_1 \cdot d(x^*,y^*),
$$
which leads to $d(x^*,y^*)=0$, that is, $x^*=y^*$, a contradiction.
\end{proof}

\begin{remark}

1) Theorem \ref{th2} is a corollary of Theorem \ref{th2a} and is obtained for $b=0$ and $\theta=\delta$ (see Example \ref{ex1}, item 1). 

2) Theorem \ref{th0} is a corollary of Theorem \ref{th2a} and is obtained for $L=0$.  

\end{remark}

We end this section by stating some other important fixed point results which are corollaries of  Theorem \ref{th2a}.

\begin{corollary} [\cite{BerP21}, Theorem 2.1] \label{cor1}
Let $(X,\|\cdot\|)$ be a Banach space and $T:X\rightarrow X$ a $(k,a$)-{\it enriched Kannan mapping}. 
Then

$(i)$ $Fix\,(T)=\{p\}$, for some $p\in X$;

$(ii)$ There exists $\lambda\in (0,1]$ such that the iterative method
$\{x_n\}^\infty_{n=0}$, given by
\begin{equation} 
x_{n+1}=(1-\lambda)x_n+\lambda T x_n,\,n\geq 0,
\end{equation}
converges to p, for any $x_0\in X$;

$(iii)$ The  estimate \eqref{3.2-1} holds
with $\delta=\dfrac{a}{1-a}$.
\end{corollary}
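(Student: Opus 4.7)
The idea is to derive the corollary from Theorem \ref{th2a} by combining Example \ref{ex1}, item 3, which reinterprets a $(k,a)$-enriched Kannan mapping as an enriched almost contraction, with a separate, very short uniqueness argument that exploits the Kannan structure directly.

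By Example \ref{ex1}, item 3, every $(k,a)$-enriched Kannan mapping is an enriched $(k,\theta,L)$-almost contraction, and the corresponding normalized constant $\delta = \theta/(k+1)$ simplifies to $a/(1-a)$, which lies in $(0,1)$ thanks to the hypothesis $a < 1/2$. Applying Theorem \ref{th3} to this enriched almost contraction data, with $\lambda := 1/(k+1)$, one obtains at once conclusions $(i)$, $(ii)$ and $(iii)$ of the corollary: $Fix\,(T)$ is nonempty, the Krasnoselskij sequence converges to a fixed point $p$ from every starting point $x_0 \in X$, and the estimate \eqref{3.2-1} holds with $\delta = a/(1-a)$.

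It remains only to upgrade existence to uniqueness, which in the setting of Theorem \ref{th2a} is normally handled via the auxiliary hypothesis \eqref{3.8}. For enriched Kannan mappings, however, uniqueness is immediate from the defining inequality \eqref{cond_eKannan} itself: if $p, q \in Fix\,(T)$, then $Tp = p$ and $Tq = q$, so the right-hand side of \eqref{cond_eKannan} vanishes, while the left-hand side becomes $\|k(p-q) + p - q\| = (k+1)\|p-q\|$, forcing $p = q$. Hence we need not check \eqref{3.8} at all in this case.

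The main technical input is thus Example \ref{ex1}, item 3, which is a routine algebraic manipulation: one sets $A := k(x-y) + Tx - y$ and $B := k(x-y) + Tx - Ty$, notes the identities $y - Ty = A - B$ and $x - Tx = (k+1)(x-y) - A$, and chains these through the enriched Kannan inequality to separate $(1-a)\|B\|$ on one side from a linear combination of $\|x-y\|$ and $\|A\|$ on the other. This is the step carrying most of the content, but it is straightforward; the real proof of the corollary is essentially an assembly of previously established pieces, and presents no genuine obstacle beyond careful bookkeeping.
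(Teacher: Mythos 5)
Your proposal is correct, and it diverges from the paper's own proof in one substantive way. The paper's proof is a two-line citation: it invokes the reduction of a $(k,a)$-enriched Kannan mapping to an enriched $\left(k,\cdot,\cdot\right)$-almost contraction (Example \ref{ex1}; note the enriched Kannan item is the third one, not the second as the printed proof says) and then concludes ``by Theorem \ref{th2a}'' --- but Theorem \ref{th2a} requires the additional uniqueness hypothesis \eqref{3.8}, which the paper never verifies for enriched Kannan mappings. You sidestep this entirely: you get existence, convergence and the estimate from Theorem \ref{th3}, and you obtain uniqueness directly from \eqref{cond_eKannan} by substituting two fixed points $p,q$, which kills the right-hand side and leaves $(k+1)\|p-q\|\leq 0$. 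This is a cleaner and more self-contained route for the uniqueness part, and it actually repairs a small gap in the paper's argument. Your sketch of the reduction itself (setting $A:=k(x-y)+Tx-y$, $B:=k(x-y)+Tx-Ty$, using $\|y-Ty\|=\|B-A\|\leq\|A\|+\|B\|$ and $x-Tx=(k+1)(x-y)-A$) is the right computation; it yields $\theta=\dfrac{a(k+1)}{1-a}$ and hence $\delta=\theta/(k+1)=\dfrac{a}{1-a}$, consistent with conclusion $(iii)$, and your observation that $a<1/2$ guarantees $\theta<k+1$ is exactly the point that makes Theorem \ref{th3} applicable.
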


\begin{proof}
By Example \ref{ex1}, item 2, any  $(k,a$)-{\it enriched Kannan mapping}
 is an enriched $\left(k, \dfrac{a}{1-a}, \dfrac{2 a}{1-a}\right)$-almost contraction. Conclusion follows by  Theorem \ref{th2a}.
\end{proof}

\begin{corollary} [\cite{BerP21}, Theorem 3.1]\label{cor2}
Let $(X,\|\cdot\|)$ be a Banach space and $T:X\rightarrow X$ a $(k,h$)-{\it enriched Bianchini mapping}, that is, a mapping for which there exist  $h\in[0,1)$ and $k\in[0,\infty)$ such that
\begin{equation} \label{Def_eBianchini}
\|k(x-y)+Tx-Ty\|\leq h \max\left\{\|x-Tx\|,\|y-Ty\|\right\},\textnormal{ for all } x,y \in X.
\end{equation}. 
Then

$(i)$ $Fix\,(T)=\{p\}$, for some $p\in X$;

$(ii)$ There exists $\lambda\in (0,1]$ such that the iterative method
$\{x_n\}^\infty_{n=0}$, given by
\begin{equation} \label{eq1.3a}
x_{n+1}=(1-\lambda)x_n+\lambda T x_n,\,n\geq 0,
\end{equation}
converges to p, for any $x_0\in X$;

$(iii)$ The following estimate holds
\begin{equation}  \label{1.3b}
\|x_{n+i-1}-p\| \leq\frac{h^i}{1-h}\cdot \|x_n-
x_{n-1}\|\,,\quad n=0,1,2,\dots;\,i=1,2,\dots
\end{equation}
\end{corollary}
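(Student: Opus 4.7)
The plan is to mimic the Kannan-enriched argument of Corollary \ref{cor1} and reduce everything to Theorem \ref{th3}: I will show that every $(k,h)$-enriched Bianchini mapping is an enriched $\bigl(k,\,h(k+1),\,h/(1-h)\bigr)$-almost contraction. Once this is achieved, Theorem \ref{th3} supplies existence of a fixed point, convergence of the Krasnoselskij iteration \eqref{eq1.3a} for $\lambda=1/(k+1)\in(0,1]$, and, since $\delta=\theta/(b+1)=h$, an error estimate that specializes exactly to \eqref{1.3b}. Uniqueness is not inherited automatically from Theorem \ref{th3}, but it drops out of \eqref{Def_eBianchini} directly: at two hypothetical fixed points $x^*,y^*$ the right-hand side vanishes, so $(k+1)\|x^*-y^*\|=\|k(x^*-y^*)+x^*-y^*\|\le 0$, forcing $x^*=y^*$.

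For the enrichment reduction, I would first observe the two elementary identities
\[
x-Tx=(k+1)(x-y)-[k(x-y)+Tx-y],\qquad y-Ty=[k(x-y)+Tx-Ty]-[k(x-y)+Tx-y],
\]
whose triangle-inequality consequences majorize $\|x-Tx\|$ and $\|y-Ty\|$ in terms of exactly the three quantities $\|x-y\|$, $\|k(x-y)+Tx-y\|$ and (only for $\|y-Ty\|$) $\|k(x-y)+Tx-Ty\|$ that are allowed on the right-hand side of \eqref{eq3}.

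The next step is to feed these bounds into \eqref{Def_eBianchini} and split cases according to which of $\|x-Tx\|,\|y-Ty\|$ realizes the maximum. If $\|x-Tx\|\ge\|y-Ty\|$, the first bound immediately produces the inequality with coefficients $h(k+1)$ and $h$. If $\|y-Ty\|>\|x-Tx\|$, the second bound creates a self-referential inequality for $\|k(x-y)+Tx-Ty\|$ which, after moving the offending term to the left-hand side and dividing by $1-h$, yields a coefficient $h/(1-h)$ in front of $\|k(x-y)+Tx-y\|$. Since $h\le h/(1-h)$ for $h\in[0,1)$, taking the pointwise maximum of the two estimates collapses both cases into the single inequality
\[
\|k(x-y)+Tx-Ty\|\le h(k+1)\|x-y\|+\frac{h}{1-h}\|k(x-y)+Tx-y\|,\qquad x,y\in X,
\]
which is precisely \eqref{eq3} with $b=k$, $\theta=h(k+1)\in(0,k+1)$, and $L=h/(1-h)\ge 0$.

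The only delicate point, and the main obstacle to watch, is the case split in this last step: if one had instead bounded $\max\{\|x-Tx\|,\|y-Ty\|\}$ by the sum $\|x-Tx\|+\|y-Ty\|$ at the outset, the subsequent absorption on the left-hand side would demand $2h/(1-h)<$ something manageable and, more seriously, would force the restriction $h<1/2$, thereby losing the full Bianchini range $h\in[0,1)$. Once the case split is executed as above, everything else is a verbatim application of Theorem \ref{th3} plus the one-line uniqueness argument.
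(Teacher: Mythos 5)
Your proof is correct and follows the same overall strategy as the paper: show that a $(k,h)$-enriched Bianchini mapping satisfies the enriched almost contraction condition of Definition \ref{def1} and then invoke the main existence theorem. Two differences are worth recording. First, the paper's one-line proof asserts without computation that $T$ is an enriched $(k,h,2h)$-almost contraction, whereas your case split produces the constants $\bigl(k,\,h(k+1),\,h/(1-h)\bigr)$; yours are the ones that actually work uniformly on the full range $h\in[0,1)$ --- the factor $k+1$ in $\theta$ is exactly what makes $\delta=\theta/(k+1)=h$ and hence reproduces estimate \eqref{1.3b} verbatim, while $L=2h$ is smaller than the needed $h/(1-h)$ as soon as $h>1/2$. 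Your warning that bounding the maximum by the sum would force $h<1/2$ is the correct diagnosis of why the case split is unavoidable. Second, you obtain uniqueness directly from \eqref{Def_eBianchini} evaluated at two fixed points, rather than through Theorem \ref{th2a}, whose additional hypothesis the paper never verifies for enriched Bianchini mappings; your route is self-contained and safer. The only loose end is the degenerate value $h=0$, for which $\theta=h(k+1)=0$ lies outside the interval $(0,k+1)$ demanded by Definition \ref{def1}; replace $\theta$ by any positive number below $k+1$ in that case.
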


\begin{proof}
Similarly to Example \ref{ex1}, item 2, we can easily prove that any  $(k,h$)-{\it enriched Bianchini mapping} 
 is an enriched $\left(k, h, 2h\right)$-almost contraction. Conclusion follows by  Theorem \ref{th2a}.
\end{proof}

\begin{corollary}[\cite{BerP19c}, Theorem 2.4] \label{cor3}
Let $(X,\|\cdot\|)$ be a Banach space and $T:X\rightarrow X$ a $(k,b$)-{\it enriched Chatterjea mapping}. Then

$(i)$ $Fix\,(T)=\{p\}$, for some $p\in X$;

$(ii)$ There exists $\lambda\in (0,1]$ such that the iterative method
$\{x_n\}^\infty_{n=0}$, given by
$$
x_{n+1}=(1-\lambda)x_n+\lambda T x_n,\,n\geq 0,
$$
converges to p, for any $x_0\in X$;

$(iii)$ The following estimate holds
$$
\|x_{n+i-1}-p\| \leq\frac{\delta^i}{1-\delta}\cdot \|x_n-
x_{n-1}\|\,,\quad n=0,1,2,\dots;\,i=1,2,\dots
$$
where $\delta=\dfrac{b}{1-b}$.
\end{corollary}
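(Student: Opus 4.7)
The plan is to follow the same pattern as the proofs of Corollaries \ref{cor1} and \ref{cor2}: first recognize that a $(k,b)$-enriched Chatterjea mapping falls inside the class of enriched almost contractions with explicit constants, and then invoke Theorem \ref{th2a}. Concretely, I aim to show that any such $T$ is an enriched $\bigl(k,\tfrac{b}{1-b},\tfrac{2b}{1-b}\bigr)$-almost contraction, exactly as claimed in item 4 of Example \ref{ex1}, so that conclusions (i)--(iii) of the corollary with $\delta=\dfrac{b}{1-b}$ fall out immediately.

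The first step is the inclusion itself. Starting from the defining inequality \eqref{Def_eChatterjea}, I apply the triangle inequality to the two terms on the right-hand side so as to isolate the expression $\|k(x-y)+Tx-Ty\|$ on one side and the expressions $\|x-y\|$ and $\|k(x-y)+Tx-y\|$ on the other. After the dust settles the estimate takes the shape
\begin{equation*}
\|k(x-y)+Tx-Ty\|\;\le\; b\,\|k(x-y)+Tx-Ty\|+b\,\|x-y\|+2b\,\|k(x-y)+Tx-y\|,
\end{equation*}
and dividing by $1-b>0$ (which is positive because $b\in[0,1/2)$) yields the enriched almost contraction inequality \eqref{eq3} with $\theta=\dfrac{b}{1-b}\in[0,1)\subset[0,k+1)$ and $L=\dfrac{2b}{1-b}$. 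The bookkeeping here is the only place where one has to be a little careful; this is the minor obstacle, but it is the same algebraic move already used to establish item 3 of Example \ref{ex1}.

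The second step is to collect the conclusions. Existence of a fixed point, convergence of the Krasnoselskij iteration with parameter $\lambda=\dfrac{1}{k+1}$, and the error estimate with $\delta=\dfrac{b}{1-b}$ follow from Theorem \ref{th3}. For uniqueness I verify hypothesis \eqref{3.8} of Theorem \ref{th2a}, or equivalently I argue directly: if $x^*,y^*\in Fix\,(T)$ then both $\|x^*-Tx^*\|$ and $\|y^*-Ty^*\|$ vanish, so substituting $x^*,y^*$ into \eqref{Def_eChatterjea} collapses it to
\begin{equation*}
(k+1)\,\|x^*-y^*\|\;\le\;2b\,(k+1)\,\|x^*-y^*\|,
\end{equation*}
and since $2b<1$ we conclude $x^*=y^*$. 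Combining this with the existence assertion from Theorem \ref{th3} gives $Fix\,(T)=\{p\}$, completing (i) and hence the full statement of the corollary.
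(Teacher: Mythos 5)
Your proposal follows the paper's own route exactly: identify a $(k,b)$-enriched Chatterjea mapping as an enriched almost contraction with the constants of Example \ref{ex1} and then invoke Theorems \ref{th3} and \ref{th2a}; your extra step of checking uniqueness directly at a pair of fixed points (where the Chatterjea inequality collapses to $(k+1)\|x^*-y^*\|\le 2b(k+1)\|x^*-y^*\|$) is correct and in fact tidier than routing through hypothesis \eqref{3.8}. One bookkeeping remark: in your triangle-inequality expansion the term $\|(k+1)(x-y)+y-Ty\|$ equals $(k+1)\|x-T_\lambda y\|$ with $\lambda=1/(k+1)$, so the coefficient of $\|x-y\|$ that comes out is $b(k+1)$, not $b$; hence the honest constant is $\theta=\dfrac{b(k+1)}{1-b}$ (still in $(0,k+1)$ since $b<1/2$), and it is precisely this value that gives $\delta=\theta/(k+1)=\dfrac{b}{1-b}$ in the estimate of part (iii) — the same imprecision already appears in the paper's Example \ref{ex1}, item 4, and does not affect any of the conclusions.
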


\begin{proof}
By Example \ref{ex1}, item 3, any  $(k,b$)-{\it enriched Chatterjea mapping} 
 is an enriched $\left(k, \dfrac{b}{1-b}, \dfrac{2 b}{1-b}\right)$-almost contraction. Conclusion follows by  Theorem \ref{th2a}.
\end{proof}

\section{An application to variational inequality problems} \label{s3}

Let $H$ be a Hilbert space and let $C\subset H$ be closed and convex. A mapping $G:H\rightarrow H$ is called {\it monotone} if
$$
\langle Gx-Gy,x-y\rangle \geq 0, \textnormal{ for all } x,y\in H.
$$
The {\it variational inequality problem} with respect to $G$ and $C$, denoted by $VIP(G,C)$, is to find $x^*\in C$ such that
$$
\langle Gx^*,x-x^*\rangle \geq 0, \textnormal{ for all } x\in H.
$$
It is well known, see for example \cite{Byr}, that if $\gamma>0$ then $x^*\in C$ is a solution of the $VIP(G,C)$ if and only if $x^*$ is a solution of the fixed point problem
$$
x=P_C\left(I-\gamma G\right) x,
$$
where $I$ is the identity mapping on $H$ and $P_C$ is the nearest point projection onto $C$. 

In Byrne \cite{Byr} it is proven, amongst many other important results, that, if $I-\gamma G$ and $P_C\left(I-\gamma G\right)$ are averaged nonexpansive mappings, then, under some additional assumptions, the iterative algorithm $\{x_n\}$ defined by 
$$
x_{n+1}=P_C\left(I-\gamma G\right) x_n,\,n\geq 0,
$$
converges {\it weakly} to a solution of the $VIP(G,C)$, if such solutions exist.

Our alternative here is to consider $VIP(G,C)$ for enriched almost contractions, which are in general discontinuous mappings, instead of nonexpansive mappings, which are always continuous. 

In this case we shall have $VIP(G,C)$  with a non unique solution, as shown by the next theorem and the considered algorithm \eqref{a7} will convergence strongly to the solution of the $VIP(G,C)$.

\begin{theorem} \label{th4}
Assume that for $\gamma >0$ and $P_C\left(I-\gamma G\right)$ is a $(k,\theta, L)$-enriched almost contraction. Then there exists $\lambda\in(0,1]$ such that the iterative algorithm $\{x_n\}$ defined by 
\begin{equation}\label{a7}
x_{n+1}=(1-\lambda)x_n+\lambda P_C\left(I-\gamma G\right)x_n,\,n\geq 0,
\end{equation}
converges strongly to a solution $x^*$ of the $VIP(G,C)$, for any $x_0\in C.$
\end{theorem}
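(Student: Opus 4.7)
The plan is to reduce Theorem \ref{th4} directly to Theorem \ref{th3} by identifying the operator $T:=P_C(I-\gamma G)$ and then invoking the equivalence between fixed points of $T$ and solutions of the $VIP(G,C)$ recalled in \eqref{vfpp}. First I would verify that the Krasnoselskij iteration \eqref{a7} is well-posed inside $C$: since $P_C$ maps $H$ onto $C$, we have $T x_n=P_C(I-\gamma G)x_n\in C$, and because $C$ is convex, the convex combination $(1-\lambda)x_n+\lambda T x_n$ remains in $C$ whenever $x_n\in C$ and $\lambda\in(0,1]$. An induction starting from $x_0\in C$ then keeps the whole sequence in $C$, so we may legitimately view $T$ as a self-map of $C$ (or, alternatively, of the ambient Banach space $H$, since the enriched almost contraction condition is assumed on all of $H$ and nothing changes).

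Next, since by hypothesis $T=P_C(I-\gamma G)$ is a $(k,\theta,L)$-enriched almost contraction, Theorem \ref{th3} applies to $T$. It yields $\lambda=\dfrac{1}{k+1}\in(0,1]$ such that the Krasnoselskij iterates
$$
x_{n+1}=(1-\lambda)x_n+\lambda T x_n=(1-\lambda)x_n+\lambda P_C(I-\gamma G)x_n
$$
converge strongly (in the norm of $H$) to some $x^{\ast}\in Fix\,(T)$, with the corresponding error estimate \eqref{3.2-1} in terms of $\delta=\theta/(k+1)$. This is precisely the algorithm \eqref{a7}.

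Finally, I would invoke the classical equivalence \eqref{vfpp}: $x^{\ast}\in C$ solves $VIP(G,C)$ if and only if $x^{\ast}=P_C(I-\gamma G)x^{\ast}$, i.e.\ $x^{\ast}\in Fix\,(T)$. Since the limit $x^{\ast}$ produced by Theorem \ref{th3} lies in $Fix\,(T)$ and automatically in $C$ (as the range of $P_C$), it is therefore a solution of $VIP(G,C)$, completing the proof.

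I do not anticipate a serious obstacle here: the result is essentially an application, and the only point that needs care is confirming the invariance of $C$ under \eqref{a7} so that Theorem \ref{th3} can be applied cleanly (the enriched almost contraction hypothesis does all the heavy lifting for convergence). If anything is subtle, it is merely the remark that, in contrast to the nonexpansive setting treated in \cite{Byr}, the enriched almost contraction framework allows discontinuous $T$ and in general does not give uniqueness of the solution of $VIP(G,C)$; nevertheless, for each starting point $x_0\in C$ the algorithm selects one specific solution, and the convergence is strong rather than weak.
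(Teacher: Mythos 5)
Your proposal is correct and follows essentially the same route as the paper: set $T:=P_C(I-\gamma G)$, restrict to the closed convex set $C$, apply Theorem \ref{th3}, and identify $Fix\,(T)$ with the solution set of $VIP(G,C)$ via \eqref{vfpp}. In fact your write-up is more careful than the paper's one-line proof (which also contains a typo, writing the operator of a split feasibility problem instead of $P_C(I-\gamma G)$), since you explicitly verify the invariance of $C$ under the Krasnoselskij iteration.
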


\begin{proof}
Since $C$ is closed, we take $X:=C$ and $$T:=P_C\left(I-\gamma A^*\left(I-P_Q\right)A\right)$$ and apply Theorem \ref{th3}.
\end{proof}
From Theorem \ref{th4} we obtain as a corollary Theorem 4.2 in \cite{BerP21}.

\begin{remark}
Note that in case of Theorem \ref{th4}, the $VIP(G,C)$ has no unique solution, while in the case of Theorem 4.2 in \cite{BerP21}, the solution is unique.

\end{remark}
\begin{example}
Let $T=P_C\left(I-\gamma G\right)$ in Theorem \ref{th4} be the function defined in Example \ref{ex2}. Then for $C=\left[\dfrac{2}{3}, \dfrac{4}{3}\right]$, $T$ is nonexpansive.  Although for (averaged) nonexpansive mappings one can guarantee only {\it weak convergence} of the iterative process, see \cite{Byr}, Theorem \ref{th4} has the remarkable feature that it ensures the {\it strong convergence} of the iterative process \eqref{a7} to the solution $x^*=1$. 


\end{example}

\section{Conclusions and an open problem}

\quad

1. In this paper we introduced and studied in the setting of a Banach space a very general class of contractive type mappings, called {\it enriched almost contractions}, which includes most of the contractive mappings in Rhoades' classification \cite{Rho}. We mention a few of them:
 Banach contractions \cite{Ban22}; Kannan contractions \cite{Kan68}, \cite{Kan69};
 Chatterjea contractions \cite{Chat};
 Zamfirescu contractions \cite{Zam}.

The class of enriched almost contractions also includes:
 almost contractions \cite{Ber04}, \cite{BerP16};
 enriched contractions \cite{BerP20};
 enriched Kannan mappings and enriched Bianchini mappings  \cite{BerP21}; enriched Chatterjea mappings \cite{BerP19c};
 enriched Zamfirescu contractions and many others.

For all previous classes, the inclusion is effective, as shown by  Example \ref{ex1}.
 \smallskip
 
2. We have shown (Theorem \ref{th3}) that, for any enriched almost contraction, the set of its fixed points is nonempty and, in general, this set contains more than one element. Moreover, there exist enriched almost contractions with the set of fixed points an infinite set, see the example mentioned in Remark \ref{rem2.3}.
\smallskip

3. We have shown (Theorem \ref{th3}) that the fixed points of an enriched almost contraction can be approximated by means of an appropriate Kransnoselskij iteration and that, in general, the Picard iteration cannot be used to approximate the fixed points of an enriched almost contraction, see Example \ref{ex2}. 
\smallskip

4. We also presented an existence and uniqueness result (Theorem \ref{th2a}) for enriched almost contractions, by imposing an additional condition. Various fixed point theorems for contractive mappings in Rhoades' classification as well as some convergence theorems for enriched mappings are particular cases of  Theorem \ref{th2a}. Amongst them we mention Theorem \ref{th0} and other three examples, i.e., Corollary \ref{cor1}, Corollary \ref{cor2} and Corollary \ref{cor3}, which are the main results in the papers \cite{BerP21} and \cite{BerP19c}.
\smallskip

5. Example \ref{ex2} also shows that the class of enriched almost contractions is independent of both the class of nonexpansive mappings and of quasi-nonexpansive mappings. This is essentially different from Banach contractions, Kannan contractions, Chatterjea contractions, Zamfirescu contractions etc., which are all quasi-nonexpansive mappings.
\smallskip

6. We also presented an existence and convergence theorem for a generic  variational inequality problem, in Section \ref{s3}. 

7. The main merits of the paper are at least the following: a) it introduces a very large class of contractive type mappings for which the (non unique) fixed point can be obtained in a constructive way; b) it unifies in a single general concept various classes of contractive mappings possessing a unique fixed point (that can be approximated either by the Picard or by the  Krasnoselskij iteration) and some classes of mappings with non unique or even infinitely many fixed points (providing a constructive method of approximating these fixed points also).

8. Similar results to the ones stated in the present paper could be obtained by applying the technique of enriching contractive mappings for other classes of mappings in \cite{Ber04},  \cite{BerP16}, \cite{Viet} and references therein.
\smallskip

We end this section by formulating the following 
\medskip

{\bf Open problem}.
The enriched almost contraction $T$ in Example \ref{ex2} has the following interesting property:

a) The restrictions of $T$ to the subsets $\left[0,\dfrac{2}{3}\right)$ and $\left[\dfrac{2}{3}, \dfrac{4}{3}\right]$  are enriched contractions (and also nonexpansive mappings);

b) $\left[0,\dfrac{2}{3}\right)$, $\left[\dfrac{2}{3},\dfrac{4}{3}\right]$ form a partition of  $\left[0,\dfrac{4}{3}\right]$;

The problem is then the following one: is it true that for any enriched almost contraction $T$ defined on a Banach space $X$, one can find a partition of $X$,
$$
X=\bigcup_{i\in I} X_i,\, X_i\cap X_j=\emptyset, i\neq j, 
$$
such that the restriction of $T$ to $X_i$ is an enriched contraction (nonexpansive mapping)?

As a motivation of this problem, we recall that for almost contractions (see \cite{BerP16}) and, in general, for weakly Picard operators (see \cite{Rus01}, \cite{RPP}), we have similar properties:

a) For any weakly Picard mapping defined on a set $X$, one can find a partition of $X$, $
X=\bigcup_{i\in I} X_i,\, X_i\cap X_j=\emptyset, i\neq j, 
$
such that the restriction of $T$ to $X_i$ is a Picard operator.

b)  For any almost contraction defined on a set $X$, one can find a partition of $X$, $
X=\bigcup_{i\in I} X_i,\, X_i\cap X_j=\emptyset, i\neq j, 
$
such that the restriction of $T$ to $X_i$ is a Banach contraction, see \cite{BerP16}.
\bigskip

\section*{Acknowledgments}
The first author's research was supported by the Department of Mathematics and Computer Science, Technical University of Cluj-Napoca, North University Centre at Baia Mare, through the Internal Research Grant No. 4/2021.

\end{document}